\documentclass[12pt,a4paper]{amsart}


\usepackage{jipam}
\usepackage{amsmath,amssymb,amscd}
\usepackage{subfigure, epsfig}
\newcommand{\mb}{\mathbf}

\newcommand{\mc}{\mathcal}

\newcommand{\ds}{\displaystyle}


\setcounter{MaxMatrixCols}{10}
\theoremstyle{plain}

\numberwithin{equation}{section}


\title[A Trace Inequality]{A generalization of a trace inequality for positive definite matrices.}
\author{Elena-Veronica Belmega}
\address{Universit\'e Paris-Sud XI\\ SUPELEC\\ Laboratoire des signaux et syst\`emes\\ Gif-sur-Yvette, France. }
\email{belmega@lss.supelec.fr}
\urladdr{http://veronica.belmega.lss.supelec.fr}

\author{M. Jungers}
\address{CNRS\\ ENSEM\\ CRAN \\ Vandoeuvre, France. }
\email{marc.jungers@cran.uhp-nancy.fr}
\urladdr{http://perso.ensem.inpl-nancy.fr/Marc.Jungers/}

\author{Samson Lasaulce}
\address{CNRS\\ SUPELEC \\ Laboratoire des signaux et syst\`emes\\ Gif-sur-Yvette, France. }
\email{lasaulce@lss.supelec.fr}
\urladdr{http://samson.lasaulce.lss.supelec.fr}

\keywords{Trace inequality, positive definite matrices, positive
semidefinite matrices} \subjclass[2000]{15A45}

\begin{document}

\begin{abstract}
In this note, we provide the generalization of the trace inequality derived in \cite{belmega-jipam-2009}. More precisely, we prove that \newline
$\mathrm{Tr} \left\{ \ds{\sum_{k=1}^K}(\mb{A}_k-\mb{B}_k) \left[\left(\ds{\sum_{\ell=1}^k}\mb{B}_{\ell}\right)^{-1}-
\left(\ds{\sum_{\ell=1}^k}\mb{A}_{\ell}\right)^{-1}\right] \right\} \geq 0$, for arbitrary $K \geq 1$ where $\mathrm{Tr}(\cdot)$ denotes the
matrix trace operator, $\mb{A}_1$, $\mb{B}_1$ are any positive definite matrices and $\mb{A}_k$, $\mb{B}_k$, for all $k \in \{2,\hdots,K\}$, are
any positive semidefinite matrices.
\end{abstract}

\maketitle

\section{Introduction}
\label{sec1}

Trace inequalities are useful in many areas such as the multiple input multiple output (MIMO) systems in control theory and communications. The
trace inequality derived in this paper is used to prove the sufficient condition that guarantees the uniqueness of a Nash equilibrium in certain
MIMO communications games with $K\geq 1$ players (see \cite{belmega-springer-2009} for details). To be more specific, the diagonally strict
concavity condition of Rosen \cite{rosen-eco-1965} is proven to be satisfied in the scenario of \cite{belmega-springer-2009}. The trace
inequality under discussion has already been proven in two special cases: in \cite{abadir-book-2005} for $K=1$ and in \cite{belmega-jipam-2009}
for $K=2$. In what follows, we will provide the proof for the general case where $K \geq 1$ is arbitrary.

\begin{theorem}
\label{lemma_K} Let $K$ be a strictly positive integer, $\mb{A}_1$, $\mb{B}_1$ be any positive definite matrices and $\forall k \in \{2,\hdots,
K\}$, $\mb{A}_k$, $\mb{B}_k$ be any positive semidefinite matrices. Then
\begin{equation}
\label{trace_K} \mc{T}_K\triangleq \mathrm{Tr} \left\{ \ds{\sum_{k=1}^K}(\mb{A}_k-\mb{B}_k)
\left[\left(\ds{\sum_{\ell=1}^k}\mb{B}_{\ell}\right)^{-1}- \left(\ds{\sum_{\ell=1}^k}\mb{A}_{\ell}\right)^{-1}\right] \right\} \geq  0,
\end{equation}
where $\mathrm{Tr}(\cdot)$ denotes the matrix trace operator.
\end{theorem}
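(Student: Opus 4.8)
The plan is to reduce everything to one elementary fact (which already contains the case $K=1$): if $\mb{E}$ is symmetric (Hermitian in the complex case) and $\mb{M},\mb{N}$ are positive semidefinite of the same size, then, writing $\mb{Z}=\mb{M}^{1/2}\mb{E}\mb{N}^{1/2}$ and using $\mb{E}^{\top}=\mb{E}$,
\[
\mathrm{Tr}\{\mb{E}\mb{M}\mb{E}\mb{N}\}=\mathrm{Tr}\{\mb{Z}\mb{Z}^{\top}\}\ge 0 .
\]
The whole argument is just a rearrangement of $\mc{T}_K$ into a sum of expressions of this shape, so no induction on $K$ is needed.

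First I would pass to the partial sums $\mb{X}_k\triangleq\sum_{\ell=1}^{k}\mb{A}_\ell$ and $\mb{Y}_k\triangleq\sum_{\ell=1}^{k}\mb{B}_\ell$, with $\mb{X}_0=\mb{Y}_0=\mb{0}$. The hypothesis that $\mb{A}_1,\mb{B}_1$ are positive definite guarantees that every $\mb{X}_k$ and $\mb{Y}_k$ is positive definite (each dominates $\mb{A}_1$, resp.\ $\mb{B}_1$), and the other matrices being positive semidefinite gives the Loewner monotonicity $\mb{X}_{k-1}\preceq\mb{X}_k$, $\mb{Y}_{k-1}\preceq\mb{Y}_k$. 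Set $\mb{E}_k\triangleq\mb{X}_k-\mb{Y}_k$ (so $\mb{E}_0=\mb{0}$) and $\bs{\Delta}_k\triangleq\mb{E}_k-\mb{E}_{k-1}=\mb{A}_k-\mb{B}_k$; both are symmetric, being differences of symmetric matrices. Using the resolvent identity $\mb{Y}_k^{-1}-\mb{X}_k^{-1}=\mb{Y}_k^{-1}(\mb{X}_k-\mb{Y}_k)\mb{X}_k^{-1}=\mb{Y}_k^{-1}\mb{E}_k\mb{X}_k^{-1}$, the quantity to be bounded becomes
\[
\mc{T}_K=\sum_{k=1}^{K}\mathrm{Tr}\{\bs{\Delta}_k\,\mb{Y}_k^{-1}\,\mb{E}_k\,\mb{X}_k^{-1}\}.
\]

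The key algebraic step is to substitute $\mb{E}_k=\mb{E}_{k-1}+\bs{\Delta}_k$, first in the middle factor above and then again after expanding a term $\mathrm{Tr}\{\mb{E}_k\mb{Y}_k^{-1}\mb{E}_k\mb{X}_k^{-1}\}$; using cyclicity of the trace together with the symmetry of $\bs{\Delta}_k,\mb{E}_k,\mb{X}_k,\mb{Y}_k$ to identify the two cross terms, one finds that the $k$-th summand equals
\[
\tfrac12\mathrm{Tr}\{\bs{\Delta}_k\mb{Y}_k^{-1}\bs{\Delta}_k\mb{X}_k^{-1}\}+\tfrac12\Bigl(\mathrm{Tr}\{\mb{E}_k\mb{Y}_k^{-1}\mb{E}_k\mb{X}_k^{-1}\}-\mathrm{Tr}\{\mb{E}_{k-1}\mb{Y}_k^{-1}\mb{E}_{k-1}\mb{X}_k^{-1}\}\Bigr).
\]
Summing over $k$, the first group is $\tfrac12\sum_{k}\mathrm{Tr}\{\bs{\Delta}_k\mb{Y}_k^{-1}\bs{\Delta}_k\mb{X}_k^{-1}\}\ge 0$ directly by the elementary fact. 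For the second group I would perform an Abel-type rearrangement; since $\mb{E}_0=\mb{0}$,
\[
\begin{aligned}
\sum_{k=1}^{K}&\Bigl(\mathrm{Tr}\{\mb{E}_k\mb{Y}_k^{-1}\mb{E}_k\mb{X}_k^{-1}\}-\mathrm{Tr}\{\mb{E}_{k-1}\mb{Y}_k^{-1}\mb{E}_{k-1}\mb{X}_k^{-1}\}\Bigr)\\
&=\mathrm{Tr}\{\mb{E}_K\mb{Y}_K^{-1}\mb{E}_K\mb{X}_K^{-1}\}+\sum_{k=1}^{K-1}\Bigl(\mathrm{Tr}\{\mb{E}_k\mb{Y}_k^{-1}\mb{E}_k\mb{X}_k^{-1}\}-\mathrm{Tr}\{\mb{E}_k\mb{Y}_{k+1}^{-1}\mb{E}_k\mb{X}_{k+1}^{-1}\}\Bigr).
\end{aligned}
\]
The isolated term is $\ge 0$ by the elementary fact once more. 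For each bracketed difference, $\mb{X}_k\preceq\mb{X}_{k+1}$ and $\mb{Y}_k\preceq\mb{Y}_{k+1}$ force $\mb{X}_{k+1}^{-1}\preceq\mb{X}_k^{-1}$ and $\mb{Y}_{k+1}^{-1}\preceq\mb{Y}_k^{-1}$; writing $\mb{Y}_k^{-1}=\mb{Y}_{k+1}^{-1}+\mb{R}$ and $\mb{X}_k^{-1}=\mb{X}_{k+1}^{-1}+\mb{S}$ with $\mb{R},\mb{S}\succeq\mb{0}$ and expanding, the difference becomes a sum of three terms of the form $\mathrm{Tr}\{\mb{E}_k\mb{P}\mb{E}_k\mb{Q}\}$ with $\mb{P},\mb{Q}\succeq\mb{0}$, each $\ge 0$ by the elementary fact. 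Hence $\mc{T}_K\ge 0$.

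I expect the only delicate point to be the bookkeeping in the key algebraic step: one must check that the symmetry of $\bs{\Delta}_k$ and $\mb{E}_k$ is exactly what yields $\mathrm{Tr}\{\mb{E}_{k-1}\mb{Y}_k^{-1}\bs{\Delta}_k\mb{X}_k^{-1}\}=\mathrm{Tr}\{\bs{\Delta}_k\mb{Y}_k^{-1}\mb{E}_{k-1}\mb{X}_k^{-1}\}$, so that the two cross terms combine with coefficient $2$. The positive definiteness of $\mb{A}_1$ and $\mb{B}_1$ enters precisely here to keep every $\mb{X}_k,\mb{Y}_k$ invertible, so that the resolvent identity and the inverse-antitonicity steps are legitimate; everything else is the elementary fact applied repeatedly.
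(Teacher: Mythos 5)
Your argument is, once unpacked, the paper's own proof with the induction unrolled. The partial sums $\mb{X}_k,\mb{Y}_k$, the resolvent identity, and the half-and-half splitting of the $k$-th summand reproduce the recursion (\ref{recursion}) combined with Lemma \ref{lemma_4} (the cross-term identity), and your Abel rearrangement is exactly the telescoping that the paper carries out by induction on $\mc{P}_K$. Your treatment of each bracketed difference $\mathrm{Tr}\{\mb{E}_k\mb{Y}_k^{-1}\mb{E}_k\mb{X}_k^{-1}\}-\mathrm{Tr}\{\mb{E}_k\mb{Y}_{k+1}^{-1}\mb{E}_k\mb{X}_{k+1}^{-1}\}$, via $\mb{Y}_k^{-1}=\mb{Y}_{k+1}^{-1}+\mb{R}$, $\mb{X}_k^{-1}=\mb{X}_{k+1}^{-1}+\mb{S}$ with $\mb{R},\mb{S}\succeq\mb{0}$, is a self-contained proof of Lemma \ref{lemma_3}, which the paper imports from \cite{belmega-jipam-2009}. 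What your version buys is an exact identity expressing $\mc{T}_K$ as a sum of manifestly nonnegative traces, rather than only the lower bound (\ref{lower_bound}); that is a modest but genuine sharpening, and the overall bookkeeping is cleaner.

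The step you yourself flag as delicate is the one place where your justification is thinner than the paper's. The identity $\mathrm{Tr}\{\mb{E}_{k-1}\mb{Y}_k^{-1}\bs{\Delta}_k\mb{X}_k^{-1}\}=\mathrm{Tr}\{\bs{\Delta}_k\mb{Y}_k^{-1}\mb{E}_{k-1}\mb{X}_k^{-1}\}$ follows from transposition plus cyclicity only in the real symmetric case. In the complex Hermitian setting (which the paper intends: Lemma \ref{lemma_3} is stated for a Hermitian $\mb{X}$), conjugate transposition plus cyclicity yields only $\mathrm{Tr}\{\mb{E}_{k-1}\mb{Y}_k^{-1}\bs{\Delta}_k\mb{X}_k^{-1}\}=\overline{\mathrm{Tr}\{\bs{\Delta}_k\mb{Y}_k^{-1}\mb{E}_{k-1}\mb{X}_k^{-1}\}}$, i.e.\ the two cross terms are complex conjugates of one another; your splitting then additionally needs this cross term to be real, which is not automatic. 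That realness (equivalently, the equality itself) is precisely what Lemma \ref{lemma_4} supplies, by expanding $\mc{E}=\mathrm{Tr}\left\{(\mb{C}-\mb{D})\left[(\mb{B}+\mb{D})^{-1}-(\mb{A}+\mb{C})^{-1}\right]\right\}$ with the resolvent identity in the two possible orders and comparing the results. Insert that argument (or restrict explicitly to real matrices) and your proof is complete.
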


\section{Auxiliary Results}
\label{sec2}

In order to prove Theorem \ref{lemma_K}, we will use the following auxiliary results.

\begin{lemma} \cite{belmega-jipam-2009}
\label{lemma_3} Let $\mb{A}$, $\mb{B}$ be two positive definite matrices and $\mb{C}$, $\mb{D}$ be two positive semidefinite matrices and
$\mb{X}$ a Hermitian matrix. Then
\begin{equation}
\mathrm{Tr} \left\{\mb{X}\mb{A}^{-1}\mb{X}\mb{B}^{-1}\right\} - \mathrm{Tr} \left\{\mb{X}(\mb{A}+\mb{C})^{-1}\mb{X}(\mb{B}+\mb{D})^{-1}\right\}
\geq 0.
\end{equation}
\end{lemma}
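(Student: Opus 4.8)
The plan is to reduce the inequality to a single elementary fact about traces: for any Hermitian matrix $\mb{X}$ and any two positive semidefinite matrices $\mb{S}$ and $\mb{T}$, one has $\mathrm{Tr}\{\mb{X}\mb{S}\mb{X}\mb{T}\} \geq 0$. To establish this auxiliary fact I would write $\mb{S} = \mb{S}^{1/2}\mb{S}^{1/2}$ and $\mb{T} = \mb{T}^{1/2}\mb{T}^{1/2}$ using the Hermitian positive semidefinite square roots, and then use the cyclic invariance of the trace to rearrange
\[
\mathrm{Tr}\{\mb{X}\mb{S}\mb{X}\mb{T}\} = \mathrm{Tr}\{\mb{T}^{1/2}\mb{X}\mb{S}^{1/2}\,\mb{S}^{1/2}\mb{X}\mb{T}^{1/2}\} = \mathrm{Tr}\{\mb{N}^\dagger\mb{N}\} \geq 0,
\]
where $\mb{N} = \mb{S}^{1/2}\mb{X}\mb{T}^{1/2}$ and $\mb{N}^\dagger = \mb{T}^{1/2}\mb{X}\mb{S}^{1/2}$, the last equality holding because $\mb{X}$, $\mb{S}^{1/2}$ and $\mb{T}^{1/2}$ are all Hermitian. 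Since $\mb{N}^\dagger\mb{N}$ is positive semidefinite, its trace is nonnegative.

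Next I would split the target difference into two telescoping pieces by inserting the intermediate term $\mathrm{Tr}\{\mb{X}(\mb{A}+\mb{C})^{-1}\mb{X}\mb{B}^{-1}\}$, so that the left-hand side of the lemma becomes
\[
\underbrace{\mathrm{Tr}\{\mb{X}[\mb{A}^{-1} - (\mb{A}+\mb{C})^{-1}]\mb{X}\mb{B}^{-1}\}}_{(\mathrm{I})} + \underbrace{\mathrm{Tr}\{\mb{X}(\mb{A}+\mb{C})^{-1}\mb{X}[\mb{B}^{-1} - (\mb{B}+\mb{D})^{-1}]\}}_{(\mathrm{II})}.
\]
Each of the two terms now has exactly the form handled by the auxiliary fact, provided the inner bracketed matrices are positive semidefinite.

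To conclude I would invoke the antitonicity of matrix inversion on the positive definite cone. Since $\mb{C} \succeq \mb{0}$ gives $\mb{A} \preceq \mb{A}+\mb{C}$, with both matrices positive definite ($\mb{A}+\mb{C}$ is positive definite because $\mb{A}$ is positive definite and $\mb{C}$ positive semidefinite), it follows that $\mb{A}^{-1} - (\mb{A}+\mb{C})^{-1} \succeq \mb{0}$; likewise $\mb{B}^{-1} - (\mb{B}+\mb{D})^{-1} \succeq \mb{0}$, while $(\mb{A}+\mb{C})^{-1} \succ \mb{0}$ and $\mb{B}^{-1} \succ \mb{0}$. Applying the auxiliary fact with $\mb{S} = \mb{A}^{-1} - (\mb{A}+\mb{C})^{-1}$ and $\mb{T} = \mb{B}^{-1}$ to term $(\mathrm{I})$, and with $\mb{S} = (\mb{A}+\mb{C})^{-1}$ and $\mb{T} = \mb{B}^{-1} - (\mb{B}+\mb{D})^{-1}$ to term $(\mathrm{II})$, shows that both terms are nonnegative, hence so is their sum.

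I expect the only genuinely substantive step to be the auxiliary trace fact: once the $\mb{N}^\dagger\mb{N}$ structure is spotted it is immediate, but recognizing that $\mathrm{Tr}\{\mb{X}\mb{S}\mb{X}\mb{T}\}$ — which is not manifestly a Gram trace — can be rewritten in this way is the crux of the argument. The telescoping decomposition and the antitonicity of inversion are standard and routine by comparison.
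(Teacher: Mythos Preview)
Your argument is correct. The telescoping split is valid, the antitonicity of inversion on the positive definite cone gives the positive semidefiniteness of the two bracketed differences, and the auxiliary fact $\mathrm{Tr}\{\mb{X}\mb{S}\mb{X}\mb{T}\}=\mathrm{Tr}\{\mb{N}^H\mb{N}\}\geq 0$ with $\mb{N}=\mb{S}^{1/2}\mb{X}\mb{T}^{1/2}$ is exactly right.

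As for comparison: the paper does not actually prove this lemma here; it simply cites \cite{belmega-jipam-2009} and moves on. So there is no in-paper proof to compare against. It is worth noting, however, that your auxiliary fact is precisely the observation the authors invoke at the very end of Section~\ref{sec3}, where they remark that any term of the form $\mathrm{Tr}\{\mb{X}\mb{B}^{-1}\mb{X}\mb{A}^{-1}\}$ with $\mb{X}$ Hermitian and $\mb{A},\mb{B}$ positive definite can be rewritten as $\mathrm{Tr}(\mb{N}\mb{N}^H)$ with $\mb{N}=\mb{A}^{-1/2}\mb{X}\mb{B}^{-1/2}$. Your version is slightly more general (allowing $\mb{S},\mb{T}$ merely positive semidefinite), which is what is needed to handle the differences $\mb{A}^{-1}-(\mb{A}+\mb{C})^{-1}$ and $\mb{B}^{-1}-(\mb{B}+\mb{D})^{-1}$, and your telescoping decomposition into two such terms is a clean and self-contained route to the result.
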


The proof can be found in \cite{belmega-jipam-2009}.

\begin{lemma}
\label{lemma_4} Let $\mb{A}$, $\mb{B}$ be two positive definite matrices, $\mb{C}$, $\mb{D}$, two positive semi-definite matrices. Then
\begin{equation}
\begin{array}{lcl}
\mathrm{Tr} \left\{(\mb{A}-\mb{B})(\mb{B}+\mb{D})^{-1}(\mb{C}-\mb{D})(\mb{A}+\mb{C})^{-1}\right\} & = &  \\
\mathrm{Tr} \left\{(\mb{C}-\mb{D})(\mb{B}+\mb{D})^{-1}(\mb{A}-\mb{B})(\mb{A}+\mb{C})^{-1}\right\} & \in & \mathbb{R}.
\end{array}
\end{equation}
\end{lemma}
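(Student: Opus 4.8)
The plan is to reduce the statement to two elementary properties of the trace of Hermitian matrices. Note first that $\mb{B}+\mb{D}$ and $\mb{A}+\mb{C}$ are positive definite, hence invertible, and that each of the four factors $\mb{A}-\mb{B}$, $(\mb{B}+\mb{D})^{-1}$, $\mb{C}-\mb{D}$, $(\mb{A}+\mb{C})^{-1}$ is Hermitian. The one structural fact I will exploit is the linear identity
\begin{equation}
(\mb{A}-\mb{B})+(\mb{C}-\mb{D})=(\mb{A}+\mb{C})-(\mb{B}+\mb{D}),
\end{equation}
which expresses the two ``difference'' factors through the two matrices that are inverted.

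I would first check that the two traces displayed in the lemma are complex conjugates of one another. Indeed, since $\overline{\mathrm{Tr}(\mb{M})}=\mathrm{Tr}(\mb{M}^{*})$ with $\mb{M}^{*}$ the conjugate transpose, and since taking the conjugate transpose of a product reverses its order while fixing each (Hermitian) factor, the conjugate of $\mathrm{Tr}\{(\mb{A}-\mb{B})(\mb{B}+\mb{D})^{-1}(\mb{C}-\mb{D})(\mb{A}+\mb{C})^{-1}\}$ is $\mathrm{Tr}\{(\mb{A}+\mb{C})^{-1}(\mb{C}-\mb{D})(\mb{B}+\mb{D})^{-1}(\mb{A}-\mb{B})\}$, and a single cyclic rotation turns this into the second trace of the lemma.

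I would then establish the equality of the two traces directly. Set $\mb{E}\triangleq(\mb{A}+\mb{C})-(\mb{B}+\mb{D})$, so that $\mb{C}-\mb{D}=\mb{E}-(\mb{A}-\mb{B})$ by the identity above. Substituting this for $\mb{C}-\mb{D}$ in each trace and expanding by linearity, the term $\mathrm{Tr}\{(\mb{A}-\mb{B})(\mb{B}+\mb{D})^{-1}(\mb{A}-\mb{B})(\mb{A}+\mb{C})^{-1}\}$ occurs in both and cancels when we subtract, so that the difference of the two traces equals
\begin{equation}
\mathrm{Tr}\{(\mb{A}-\mb{B})(\mb{B}+\mb{D})^{-1}\mb{E}(\mb{A}+\mb{C})^{-1}\}-\mathrm{Tr}\{\mb{E}(\mb{B}+\mb{D})^{-1}(\mb{A}-\mb{B})(\mb{A}+\mb{C})^{-1}\}.
\end{equation}
Now I would use the telescoping $(\mb{B}+\mb{D})^{-1}\mb{E}(\mb{A}+\mb{C})^{-1}=(\mb{B}+\mb{D})^{-1}-(\mb{A}+\mb{C})^{-1}$ in the first trace, and, after rotating the second trace cyclically so that $\mb{E}$ sits between the two inverses, the analogous $(\mb{A}+\mb{C})^{-1}\mb{E}(\mb{B}+\mb{D})^{-1}=(\mb{B}+\mb{D})^{-1}-(\mb{A}+\mb{C})^{-1}$. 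Both traces then reduce to $\mathrm{Tr}\{(\mb{A}-\mb{B})[(\mb{B}+\mb{D})^{-1}-(\mb{A}+\mb{C})^{-1}]\}$, so their difference is $0$ and the two traces coincide.

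Putting the two steps together, the common value of the two traces equals its own complex conjugate, hence is real, which proves the lemma. I do not anticipate any real difficulty here: the single idea is the linear identity relating the difference-factors to the inverted matrices, and everything else is bookkeeping with the cyclic invariance of the trace. In particular, no positivity is used beyond what is needed for $\mb{B}+\mb{D}$ and $\mb{A}+\mb{C}$ to be invertible.
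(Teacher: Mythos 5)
Your proof is correct and takes essentially the same route as the paper: both arguments rest on the telescoping identity $(\mb{B}+\mb{D})^{-1}-(\mb{A}+\mb{C})^{-1}=(\mb{B}+\mb{D})^{-1}\left[(\mb{A}+\mb{C})-(\mb{B}+\mb{D})\right](\mb{A}+\mb{C})^{-1}$ together with $(\mb{A}+\mb{C})-(\mb{B}+\mb{D})=(\mb{A}-\mb{B})+(\mb{C}-\mb{D})$, cyclic invariance of the trace, and the conjugate-transpose observation for realness. The only cosmetic difference is that the paper expands the auxiliary quantity $\mathrm{Tr}\{(\mb{C}-\mb{D})[(\mb{B}+\mb{D})^{-1}-(\mb{A}+\mb{C})^{-1}]\}$ in two ways and cancels the common ``square'' term, while you substitute $\mb{C}-\mb{D}=\mb{E}-(\mb{A}-\mb{B})$ and cancel the analogous term; these are mirror images of the same computation.
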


\begin{proof}
To prove this result, let us define $\mc{E}$ as follows:
\begin{equation}
\label{eq:e} \mc{E}=\mathrm{Tr} \left\{(\mb{C}-\mb{D})\left[(\mb{B}+\mb{D})^{-1}-(\mb{A}+\mb{C})^{-1}\right]\right\}
\end{equation}
We observe that $\mc{E}$ can be written in two different ways:
\begin{equation}
\label{eq1}
\begin{array}{lcl}
\mc{E}& = & \mathrm{Tr} \left\{(\mb{C}-\mb{D})(\mb{B}+\mb{D})^{-1}[\mb{A}+\mb{C}-\mb{B}-\mb{D}](\mb{A}+\mb{C})^{-1}\right\} \\
& = & \mathrm{Tr} \left\{(\mb{C}-\mb{D})(\mb{B}+\mb{D})^{-1}(\mb{C}-\mb{D})(\mb{A}+\mb{C})^{-1}\right\} + \\
& & \mathrm{Tr} \left\{(\mb{C}-\mb{D})(\mb{B}+\mb{D})^{-1}(\mb{A}-\mb{B})(\mb{A}+\mb{C})^{-1}\right\},
\end{array}
\end{equation}
and
\begin{equation}
\label{eq2}
\begin{array}{lcl}
\mc{E}& = & \mathrm{Tr} \left\{(\mb{C}-\mb{D})(\mb{A}+\mb{C})^{-1}[\mb{A}+\mb{C}-\mb{B}-\mb{D}](\mb{B}+\mb{D})^{-1}\right\} \\
& = & \mathrm{Tr} \left\{(\mb{C}-\mb{D})(\mb{A}+\mb{C})^{-1}(\mb{C}-\mb{D})(\mb{B}+\mb{D})^{-1}\right\} +\\
& & \mathrm{Tr} \left\{(\mb{C}-\mb{D})(\mb{A}+\mb{C})^{-1}(\mb{A}-\mb{B})(\mb{B}+\mb{D})^{-1}\right\}.
\end{array}
\end{equation}
Using this fact and the commutative property of the trace of a matrix product, the desired result follows directly. The only thing left to be
proven is that $\mc{E}$ is real. To this end, if we denote by $\mb{M} = (\mb{C}-\mb{D})(\mb{B}+\mb{D})^{-1}(\mb{A}-\mb{B})(\mb{A}+\mb{C})^{-1}
$, we observe that $\mb{M}^H= (\mb{A}+\mb{C})^{-1}(\mb{A}-\mb{B})(\mb{B}+\mb{D})^{-1}(\mb{C}-\mb{D})$. Therefore, we obtain that
$\mathrm{Tr}(\mb{M}^H)=\mathrm{Tr}(\mb{M})$ and also $\mathrm{Tr}(\mb{M}) \in \mathbb{R}$.
\end{proof}

\section{Proof of Theorem \ref{lemma_K}}
\label{sec3}

Define, for all $k\geq 1$, $\mb{X}_k=\ds{\sum_{i=1}^k \mb{A}_i}$ and $\mb{Y}_k=\ds{\sum_{i=1}^k \mb{B}_i}$. Notice that $\mb{X}_k$ and
$\mb{Y}_k$ are positive definite matrices. We observe that $\mc{T}_K$ can be re-written recursively as follows:
\begin{equation}
\label{recursion} \left\{
\begin{array}{lcl}
\mc{T}_1 & =& \mathrm{Tr} \left\{(\mb{A}_1-\mb{B}_1)\mb{Y}_1^{-1}(\mb{A}_1-\mb{B}_1)\mb{X}_1^{-1}\right\}\\
\\
\mc{T}_K & = & \mc{T}_{K-1} + \mathrm{Tr} \left\{(\mb{A}_K-\mb{B}_K)\mb{Y}_K^{-1}(\mb{A}_K-\mb{B}_K)\mb{X}_K^{-1}\right\} +  \\
\\
& & \mathrm{Tr} \left\{(\mb{A}_K-\mb{B}_K)\mb{Y}_K^{-1}(\mb{X}_{K-1}- \mb{Y}_{K-1})\mb{X}_K^{-1}\right\}
\end{array} \right.
\end{equation}
We proceed in two steps. First, we find a lower bound for $\mc{T}_K$ and then we prove that this bound is positive.

We start by proving that, for all $K \geq 1$:
\begin{equation}
\label{lower_bound} \mc{T}_K \geq \frac{1}{2} \ds{\sum_{i=1}^K}
\mathrm{Tr}\left\{(\mb{A}_i-\mb{B}_i)\mb{Y}_i^{-1}(\mb{A}_i-\mb{B}_i)\mb{X}_i^{-1}\right\} + \frac{1}{2}
\mathrm{Tr}\left\{(\mb{X}_K-\mb{Y}_K)\mb{Y}_K^{-1}(\mb{X}_K-\mb{Y}_K)\mb{X}_K^{-1}\right\}
\end{equation}
To this end we proceed by induction on $K$. For all $K \geq 1$, define the proposition:
\begin{equation}
\mc{P}_K: \ \mc{T}_K \geq  \frac{1}{2} \ds{\sum_{i=1}^K}
\mathrm{Tr}\left\{(\mb{A}_i-\mb{B}_i)\mb{Y}_i^{-1}(\mb{A}_i-\mb{B}_i)\mb{X}_i^{-1}\right\} + \frac{1}{2}
\mathrm{Tr}\left\{(\mb{X}_K-\mb{Y}_K)\mb{Y}_K^{-1}(\mb{X}_K-\mb{Y}_K)\mb{X}_K^{-1}\right\}.
\end{equation}
It is easy to check that, for $K=1$, $\mc{P}_1$ is true:
\begin{equation}
\begin{array}{lcl}
\mc{T}_1 & = & \mathrm{Tr} \left\{(\mb{A}_1-\mb{B}_1)\mb{Y}_1^{-1}(\mb{A}_1-\mb{B}_1)\mb{X}_1^{-1}\right\} \\
\\
& = & \frac{1}{2}\mathrm{Tr} \left\{(\mb{A}_1-\mb{B}_1)\mb{Y}_1^{-1}(\mb{A}_1-\mb{B}_1)\mb{X}_1^{-1}\right\} +\frac{1}{2}\mathrm{Tr}
\left\{(\mb{X}_1-\mb{Y}_1)\mb{Y}_1^{-1}(\mb{X}_1-\mb{Y}_1)\mb{X}_1^{-1}\right\}.
\end{array}
\end{equation}
Now, let us assume that $\mc{P}_{K-1}$ is true and prove that $\mc{P}_K$ is also true. We have that:

\begin{equation}
\label{eq3}
\begin{array}{lcl}
\mc{T}_{K-1} & \geq & \frac{1}{2} \ds{\sum_{i=1}^{K-1}}
\mathrm{Tr}\left\{(\mb{A}_i-\mb{B}_i)\mb{Y}_{i}^{-1}(\mb{A}_i-\mb{B}_i)\mb{X}_{i}^{-1}\right\} + \\
\\
& & \frac{1}{2} \mathrm{Tr}\left\{(\mb{X}_{K-1}-\mb{Y}_{K-1})\mb{Y}_{K-1}^{-1}(\mb{X}_{K-1}-\mb{Y}_{K-1})\mb{X}_{K-1}^{-1}\right\}.
\end{array}
\end{equation}

From (\ref{eq3}) and the recursive formula (\ref{recursion}), we further obtain:

\begin{equation}
\begin{array}{lcl}
\mc{T}_K & \geq & \frac{1}{2} \ds{\sum_{i=1}^{K-1}}
\mathrm{Tr}\left\{(\mb{A}_i-\mb{B}_i)\mb{Y}_{i}^{-1}(\mb{A}_i-\mb{B}_i)\mb{X}_{i}^{-1}\right\} + \\
\\
& & \frac{1}{2} \mathrm{Tr}\left\{(\mb{X}_{K-1}-\mb{Y}_{K-1})\mb{Y}_{K-1}^{-1}(\mb{X}_{K-1}-\mb{Y}_{K-1})\mb{X}_{K-1}^{-1}\right\} + \\
\\
& &
\mathrm{Tr} \left\{(\mb{A}_K-\mb{B}_K)\mb{Y}_K^{-1}(\mb{A}_K-\mb{B}_K)\mb{X}_K^{-1}\right\} + \mathrm{Tr} \left\{(\mb{A}_K-\mb{B}_K)\mb{Y}_K^{-1}(\mb{X}_{K-1}- \mb{Y}_{K-1})\mb{X}_K^{-1}\right\} \\
\\
 & \stackrel{(a)}{\geq} & \frac{1}{2} \ds{\sum_{i=1}^{K-1}} \mathrm{Tr}\left\{(\mb{A}_i-\mb{B}_i)\mb{Y}_{i}^{-1}(\mb{A}_i-\mb{B}_i)\mb{X}_{i}^{-1}\right\} + \\
\\
 & & \frac{1}{2} \mathrm{Tr}\left\{(\mb{X}_{K-1}-\mb{Y}_{K-1})\mb{Y}_{K}^{-1}(\mb{X}_{K-1}-\mb{Y}_{K-1})\mb{X}_{K}^{-1}\right\}+ \\
\\
& & \mathrm{Tr} \left\{(\mb{A}_K-\mb{B}_K)\mb{Y}_K^{-1}(\mb{A}_K-\mb{B}_K)\mb{X}_K^{-1}\right\} + \mathrm{Tr} \left\{(\mb{A}_K-\mb{B}_K)\mb{Y}_K^{-1}(\mb{X}_{K-1}- \mb{Y}_{K-1})\mb{X}_K^{-1}\right\} \\
\\
& \stackrel{(b)}{=} & \frac{1}{2} \ds{\sum_{i=1}^{K-1}} \mathrm{Tr}\left\{(\mb{A}_i-\mb{B}_i)\mb{Y}_{i}^{-1}(\mb{A}_i-\mb{B}_i)\mb{X}_{i}^{-1}\right\} + \\
\\
& &\frac{1}{2} \mathrm{Tr}\left\{(\mb{X}_{K-1}-\mb{Y}_{K-1})\mb{Y}_{K}^{-1}(\mb{X}_{K-1}-\mb{Y}_{K-1})\mb{X}_{K}^{-1}\right\} +\\
\\
& & \mathrm{Tr} \left\{(\mb{A}_K-\mb{B}_K)\mb{Y}_K^{-1}(\mb{A}_K-\mb{B}_K)\mb{X}_K^{-1}\right\} + \frac{1}{2}\mathrm{Tr} \left\{(\mb{A}_K-\mb{B}_K)\mb{Y}_K^{-1}(\mb{X}_{K-1}- \mb{Y}_{K-1})\mb{X}_K^{-1}\right\} + \\
\\
& & \frac{1}{2} \mathrm{Tr} \left\{(\mb{X}_{K-1}-\mb{Y}_{K-1})\mb{Y}_K^{-1}(\mb{A}_{K}- \mb{B}_{K})\mb{X}_K^{-1}\right\} \\
\\
& = & \frac{1}{2} \ds{\sum_{i=1}^{K}} \mathrm{Tr}\left\{(\mb{A}_i-\mb{B}_i)\mb{Y}_{i}^{-1}(\mb{A}_i-\mb{B}_i)\mb{X}_{i}^{-1}\right\} + \\
\\
& & \frac{1}{2} \mathrm{Tr}\left\{(\mb{X}_{K-1}-\mb{Y}_{K-1})\mb{Y}_{K}^{-1}(\mb{X}_{K-1}-\mb{Y}_{K-1})\mb{X}_{K}^{-1}\right\}+ \\
\\
& & \frac{1}{2} \mathrm{Tr} \left\{(\mb{A}_K-\mb{B}_K)\mb{Y}_K^{-1}(\mb{A}_K-\mb{B}_K)\mb{X}_K^{-1}\right\} + \frac{1}{2}\mathrm{Tr} \left\{(\mb{A}_K-\mb{B}_K)\mb{Y}_K^{-1}(\mb{X}_{K-1}- \mb{Y}_{K-1})\mb{X}_K^{-1}\right\}  + \\
\\
& &  \frac{1}{2} \mathrm{Tr} \left\{(\mb{X}_{K-1}-\mb{Y}_{K-1})\mb{Y}_K^{-1}(\mb{A}_{K} - \mb{B}_{K})\mb{X}_K^{-1}\right\} \\
\end{array}
\end{equation}
\begin{equation}
\nonumber
\begin{array}{lcl}
& = &\frac{1}{2} \ds{\sum_{i=1}^{K}} \mathrm{Tr}\left\{(\mb{A}_i-\mb{B}_i)\mb{Y}_{i}^{-1}(\mb{A}_i-\mb{B}_i)\mb{X}_{i}^{-1}\right\} + \\
\\
& & \frac{1}{2} \mathrm{Tr}\left\{(\mb{X}_{K-1}+\mb{A}_K-\mb{Y}_{K-1}-\mb{B}_K)\mb{Y}_{K}^{-1}(\mb{X}_{K-1}+\mb{A}_K-\mb{Y}_{K-1}-\mb{B}_K)\mb{X}_{K}^{-1}\right\}\\
\\
& \stackrel{(c)}{=} & \frac{1}{2} \ds{\sum_{i=1}^{K}}
\mathrm{Tr}\left\{(\mb{A}_i-\mb{B}_i)\mb{Y}_{i}^{-1}(\mb{A}_i-\mb{B}_i)\mb{X}_{i}^{-1}\right\} +
\frac{1}{2} \mathrm{Tr}\left\{(\mb{X}_{K}-\mb{Y}_{K})\mb{Y}_{K}^{-1}(\mb{X}_{K}-\mb{Y}_{K})\mb{X}_{K}^{-1}\right\}.\\
\end{array}
\end{equation}

The inequality (a) follows by applying Lemma \ref{lemma_3} to the second term on the right and also by considering that
$\mb{X}_K=\mb{X}_{K-1}+\mb{A}_K$ and $\mb{Y}_K=\mb{Y}_{K-1}+\mb{B}_K$. The equality (b) follows from Lemma \ref{lemma_4}.

The last step of the proof of Theorem \ref{lemma_K} reduces to showing that the term on the right side of the equality (c) is positive. This can
be easily checked by observing that all the terms of the form $\mathrm{Tr}\left\{\mb{X}\mb{B}^{-1}\mb{X}\mb{A}^{-1}\right\}$, with $\mb{X}$ a
Hermitian matrix, $\mb{A}$ and $\mb{B}$ two  positive definite matrices, can be re-written as $\mathrm{Tr}(\mb{N}\mb{N}^H) \geq 0$ with
$\mb{N}=\mb{A}^{-1/2}\mb{X}\mb{B}^{-1/2}$. Thus, for all $K\geq 1$, $\mc{T}_K \geq 0$ and the desired result has been proven.

\vspace{10pt} \textbf{Acknowledgment.} The first author's work was partially supported by the French L'Or\'eal Program \emph{``For young women
doctoral candidates in science''} 2009.

\end{document}